\def\C{\mathbb C}
\def\R{\mathbb R}
\def\Q{\mathbb Q}%%%%\rational numbers
\def\Z{\mathbb Z} %%%%\intgers
\def\R{\mathbb R} %%%%\real numbers
\def\C{\mathbb C} %%%%\complex numbers
\newtheorem{thm}{Theorem}[section]
\newtheorem{cor}[thm]{Corollary}
\newtheorem{prop}[thm]{Proposition}
\theoremstyle{definition}
\newtheorem{defn}[thm]{Definition}
\numberwithin{equation}{section}
\begin{document}
\baselineskip=17pt

\title [The ratio of $\theta$-congruent numbers]{The ratio of $\theta$-congruent numbers}
\author[Yan Li]{Yan Li}
\email{liyan\_00@mails.tsinghua.edu.cn}
\author[Su Hu]{Su Hu}
\address{Department of Mathematical Sciences\\ Tsinghua University\\
Beijing 100084, China (Su Hu)} \email{hus04@mails.tsinghua.edu.cn}

\begin{abstract}Let $0<\theta<\pi$ such that $\cos\theta\in \Q$. In this paper, we prove that for given positive
square-free coprime integers $k,l$, there exist infinitely many
pairs $(M,N)$ of $\theta$-congruent numbers such that $lN=kM$. This
generalize the previous result of Rajan and Ramaroson~\cite{RajanR}
on the ratio of congruent numbers from congruent numbers (i.e.
$\theta=\pi/2$) to arbitrary $\theta$-congruent numbers.
\end{abstract}

\subjclass[2000]{Primary 11G05;
Secondary 14G05} \maketitle

\section{Introduction}
A congruent number is a square-free integer which is the area of a
right triangle with rational sides. The interesting problem to
decide integers which are congruent numbers was systematically
studied by Arab scholars in the tenth century. The congruent number
problem are closely related to the arithmetic theory of elliptic
curves. It is well known that a square-free integer $n$ is a
congruent number if and only if the elliptic curve $ny^2=x(x^2-1)$
has positive rank (see~\cite{Koblitz}, for instance).

Fujiwara~\cite{Fujiwara} extended the concept of congruent
numbers by considering general (not necessarily right) triangles
with rational sides. Let $\theta$ be a real number with
$0<\theta<\pi$. A triangle with an angle $\theta$ and rational sides
is called a rational $\theta$-triangle. Notice that, for such a
triangle, $\cos\theta$ is necessarily rational. In the sequel, we
always assume $\cos\theta\in \mathbb{Q}$ and denote
\begin{equation}\label{eq:1}
\cos\theta=\frac{s}{r},\ r,s\in \mathbb{Z},\ r>0,\ {\mathrm
gcd}(s,r)=1.
\end{equation}
\begin{defn}A natural number $n$ is $\theta$-congruent if
$n$ is square-free and $nr\sin\theta$ is the area of a rational
$\theta$-triangle.
\end{defn}
For $\theta=\frac{\pi}{2}$, $\theta$-congruent numbers are just the
usual congruent numbers. Like the congruent numbers, the
$\theta$-congruent numbers are also closely related to the
arithmetic of elliptic curves. Let $E_{n,\theta}$ be the elliptic
curve defined by
$$E_{n,\theta}:\ ny^2=\frac{1}{r}x(x+\cos \theta-1)(x+\cos
\theta+1).
$$
In a slight different form, Fujiwara~\cite{Fujiwara} showed that
\begin{thm}(Fujiwara)\label{Fujiwara} Let n be any square-free
natural number. Then

(1) $n$ is $\theta$-congruent if and only if $E_{n,\theta}$ has a
rational point of order greater than $2$.

(2) For $n\neq1,2,3,6$, $n$ is $\theta$-congruent if and only if
$E_{n,\theta}(\mathbb Q)$ has positive rank.
\end{thm}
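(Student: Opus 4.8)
The plan is to prove Fujiwara's theorem by first rephrasing the geometric condition algebraically and then translating it into the arithmetic of $E_{n,\theta}$. A rational $\theta$-triangle is determined by the two sides $p,q$ enclosing the angle $\theta$ together with the opposite side $t$; its area is $\frac12 pq\sin\theta$ and the law of cosines gives $t^{2}=p^{2}+q^{2}-2pq\cos\theta$. Since $\sin\theta$ cancels, $n$ is $\theta$-congruent precisely when there exist positive rationals $p,q,t$ with
\[
pq=2nr,\qquad t^{2}=p^{2}+q^{2}-4ns.
\]
From these I record the identities $(p+q)^{2}=t^{2}+4n(r+s)$ and $(p-q)^{2}=t^{2}-4n(r-s)$, which already display the shifts $1\pm\cos\theta=(r\pm s)/r$ occurring in the cubic defining $E_{n,\theta}$.

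First I would set up the correspondence underlying part (1). Substituting $x=t^{2}/(4nr)$ into $ny^{2}=\frac1r x(x+\cos\theta-1)(x+\cos\theta+1)$, the three factors become $\frac{t^{2}}{4nr},\ \frac{(p-q)^{2}}{4nr},\ \frac{(p+q)^{2}}{4nr}$, so their product is a perfect square and the triangle produces the rational point
\[
\left(\frac{t^{2}}{4nr},\ \frac{t(p^{2}-q^{2})}{8n^{2}r^{2}}\right)\in E_{n,\theta}(\Q).
\]
Its $y$-coordinate vanishes exactly when $p=q$, so a non-isosceles triangle yields a point with $y\neq 0$, i.e.\ of order $>2$; and a purely isosceles solution forces $2nr$ and $n(r-s)$ to be squares, which (as in the torsion analysis below) already guarantees a point of order $4$. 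Conversely, solving the same relations expresses $p,q,t$ as rational functions of a non-$2$-torsion point, so that after translating by the $2$-torsion to fix signs one recovers an honest triangle. Making this inverse explicit (a Koblitz-type bijection) is routine but requires care with the four sign choices.

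The substance of the theorem is part (2), and this is where I expect the real work to lie. Because the three roots $E_{1}=0,\ E_{2}=n(r-s),\ E_{3}=-n(r+s)$ of the integral model $Y^{2}=X(X-n(r-s))(X+n(r+s))$ are all rational, $E_{n,\theta}(\Q)$ always contains the full $2$-torsion $(\Z/2\Z)^{2}$. By Mazur's classification the torsion subgroup is therefore one of $(\Z/2\Z)^{2}$, $\Z/2\Z\times\Z/4\Z$, $\Z/2\Z\times\Z/6\Z$, $\Z/2\Z\times\Z/8\Z$. Using the standard criterion that $E_{n,\theta}(\Q)$ contains a point $P$ of order $4$ with $2P=(E_{i},0)$ iff both differences $E_{i}-E_{j}$ $(j\neq i)$ are squares in $\Q$, only $E_{2}$ is admissible (the roots $E_{1},E_{3}$ give a negative difference), and the condition becomes that $n(r-s)$ and $2nr$ are both squares; the analogous $3$- and $8$-division conditions give further square constraints on $n(r-s),\,n(r+s),\,2nr$. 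I would then check that, ranging over all admissible coprime $(r,s)$, these can be met only for $n\in\{1,2,3,6\}$; hence for every other square-free $n$ the torsion is exactly $(\Z/2\Z)^{2}$. Consequently a point of order $>2$ must have infinite order, so by part (1), $n$ is $\theta$-congruent iff the rank is positive, while the reverse implication (positive rank $\Rightarrow$ a point of order $>2$) is immediate.

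The main obstacle is precisely this last enumeration: carrying out the $4$-, $6$- and $8$-division computations uniformly in $\theta$ and proving that the square conditions isolate exactly the four exceptional values $n=1,2,3,6$. A secondary, more bookkeeping-level difficulty is verifying that the explicit inverse of the triangle-to-point map is rational and lands on a genuine (positively oriented) triangle after adjusting by the $2$-torsion.
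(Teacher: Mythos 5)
This statement is Fujiwara's theorem, which the paper quotes from \cite{Fujiwara} without proof, so there is no internal argument to compare against; your outline has to stand on its own. The forward construction is correct and well set up: from a triangle with enclosing sides $p,q$ and opposite side $t$ you correctly get $pq=2nr$, $t^2=p^2+q^2-4ns$, the identities $(p\pm q)^2=t^2\pm 4n(r\pm s)$, and hence the rational point with $x=t^2/(4nr)$; the observation that an isosceles solution forces $2nr$ and $n(r-s)$ to be squares, which is exactly the $4$-division condition at the root $n(r-s)$ of $Y^2=X(X-n(r-s))(X+n(r+s))$, is also right and is the correct way to handle that degenerate case.

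There are, however, two genuine gaps. First, in the converse direction of (1) you assert that ``solving the same relations expresses $p,q,t$ as rational functions of a non-$2$-torsion point.'' That is false: for an arbitrary point $P=(X,Y)$ with $Y\neq 0$ on the integral model, $t=2\sqrt{X}$ need not be rational, and translating $P$ by $2$-torsion does not repair this. The missing idea is duplication: because the full $2$-torsion is rational, $X(2P)$, $X(2P)-n(r-s)$ and $X(2P)+n(r+s)$ are \emph{all} rational squares, and $2P$ has $Y\neq 0$ unless $P$ has exact order $4$ (the isosceles case you already treat). The triangle must be built from $2P$, not from $P$. Second, the entire content of (2) is the claim that for square-free $n\notin\{1,2,3,6\}$ the torsion of $E_{n,\theta}(\Q)$ is exactly $(\Z/2\Z)^2$ uniformly in $\theta$; you correctly identify this as the main obstacle but do not carry it out, and your phrase ``analogous $3$- and $8$-division conditions'' understates the difficulty — the $3$-division condition is not a squareness condition on differences of roots but a quartic division polynomial whose rational roots must be analysed together with the coprimality of $r,s$ and the square-freeness of $n$. (For contrast, the $4$-torsion case does close cleanly: $2nr=\square$ forces every odd prime $p\mid n$ to divide $r$, and then $\gcd(r,s)=1$ makes $p\,\|\,n(r-s)$, contradicting $n(r-s)=\square$ unless $n\in\{1,2\}$.) Until the doubling step is inserted and the torsion enumeration is actually performed, the proof is incomplete.
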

For more properties of $\theta$-congruent numbers,
see~\cite{Goto},~\cite{Kan}.

For congruent numbers, Chahal~\cite{Chahal} has proved that there
exist infinitely many congruent numbers in each residue class modulo
8. Bennett~\cite{Bennett} extended Chahal's result to any integer
$m>1$. Johnstone and Spearman~\cite{JhonstoneS} made further
improvements on Bennett's result.

Recently, Rajan and Ramaroson ~\cite{RajanR}  got the following
interesting result on the ratio of congruent numbers.
\begin{thm}(Rajan and Ramaroson) If $k$ and $l$ are positive, square-free coprime
integers, then there exist infinitely many pairs $(M,N)$ of
congruent numbers such that $lN=kM$.
\end{thm}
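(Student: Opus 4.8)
The plan is to convert the problem into the arithmetic of the congruent number curves and then to exhibit an explicit infinite family. First I would record the forced shape of an admissible pair. Since $\gcd(k,l)=1$, the relation $lN=kM$ gives $k\mid N$ and $l\mid M$, so $N=kM'$ and $M=lM'$ for a common positive rational $M'$; requiring $M$ and $N$ to be square-free forces $M'$ to be a square-free integer with $\gcd(M',kl)=1$. Thus the theorem is equivalent to: there are infinitely many square-free integers $M'$ coprime to $kl$ for which $kM'$ \emph{and} $lM'$ are both congruent numbers. By the $\theta=\frac{\pi}{2}$ case of Theorem~\ref{Fujiwara} (equivalently, the classical fact quoted in the Introduction, with $E_n\colon ny^2=x^3-x$), this says precisely that both $E_{kM'}$ and $E_{lM'}$ carry a rational point with $y\neq0$, i.e.\ that $kM'$ and $lM'$ are simultaneously square-free parts of areas of rational right triangles.

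Next I would package the two congruence conditions into a single geometric object. A point with $y\neq0$ on $E_{kM'}$ produces $x$ with $kM'y^2=x^3-x$, so $\mathrm{sqfree}(x^3-x)=kM'$; likewise a point on $E_{lM'}$ gives $x'$ with $\mathrm{sqfree}(x'^3-x')=lM'$. Eliminating $M'$ shows that such a simultaneous solution exists exactly when $l(x^3-x)y'^2=k(x'^3-x')y^2$, equivalently when the surface
\[
S\colon\quad z^2=kl\,x(x^2-1)\,x'(x'^2-1)
\]
has a rational point with $x(x^2-1)x'(x'^2-1)\neq0$. Every such point returns two congruent numbers $a=\mathrm{sqfree}(x^3-x)$ and $b=\mathrm{sqfree}(x'^3-x')$ lying in the same square class as $k$ and $l$, that is $a/b\in\frac{k}{l}(\Q^*)^2$. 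The plan is to produce infinitely many points on $S$ along which the attached integers vary; I would view $S$ as an elliptic surface over the $x$-line and hunt for a section of infinite order, trying substitutions $x'=\varphi(x)$ defined over $\Q(\sqrt{kl})$ engineered so that the conjugation $\sqrt{kl}\mapsto-\sqrt{kl}$ forces $kl\,(x^3-x)(\varphi^3-\varphi)$ to become a perfect square in $\Q(x)$, the stubborn factor $kl$ being absorbed by the conjugate structure.

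The hard part will be exactly this existence step, and it is genuinely arithmetic rather than formal: simultaneous congruence of $kM'$ and $lM'$ amounts to a fibre of $S$ having positive Mordell--Weil rank, and one must guarantee this uniformly in $k$ and $l$. The naive shortcut of taking two triangles with a common leg collapses $S$ onto the curve cut out by ``$\sigma^2+k^2$ and $\sigma^2+l^2$ both rational squares,'' whose rank can already be $0$ (for instance when $(k,l)=(1,2)$, where nonetheless genuine pairs such as $(14,7)$ exist), so the full two--dimensional freedom of $S$ is essential. I would resolve the step by exhibiting one explicit rational point, or section, $P_0$ on $S$ valid for general $k,l$ and proving that it is non-torsion --- either through Silverman's specialization theorem, once $P_0$ is realized as a section of $S\to\mathbb{P}^1$, or by a direct reduction-modulo-$p$ and canonical-height estimate forcing $P_0$ to have infinite order. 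A second, more delicate, part of the same obstacle is \emph{normalization}: the points of $S$ only control square classes, so the family must be arranged (and finitely many points discarded) so that $k\mid a$, $\gcd(a/k,\,l)=1$, and $b=\tfrac{l}{k}a$ \emph{exactly}, turning the square-class identity $a/b\in\frac kl(\Q^*)^2$ into the honest equality $lb=ka$ with $M'=a/k$ square-free and coprime to $kl$.

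Finally I would harvest the conclusion. Infinitely many multiples in $\la P_0\ra$ have unbounded height, so the square-free integers $M'$ read off from them cannot all coincide and hence take infinitely many distinct values; after removing the finitely many with $\gcd(M',kl)>1$ and applying the normalization above, I obtain infinitely many admissible $M'$. For each, both $kM'$ and $lM'$ are congruent by construction, so $(M,N)=(lM',kM')$ is a pair of congruent numbers with $lN=kM$, and distinct $M'$ yield distinct pairs, which completes the argument.
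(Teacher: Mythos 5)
Your opening reduction (to: infinitely many square-free $M'$ coprime to $kl$ such that $kM'$ and $lM'$ are both congruent) is correct and is exactly how the paper frames its Corollary 1.5, but everything after it is a research plan rather than a proof: the two steps you yourself call ``the hard part'' are left unresolved, and they are the entire content of the theorem. You never produce the rational point (or section) $P_0$ on $S\colon z^2=kl\,x(x^2-1)x'(x'^2-1)$, never prove any such point is non-torsion, and never carry out the ``normalization'' from square classes to the exact equality $lN=kM$. The normalization problem is moreover worse in your formulation than you acknowledge: a point of $S$ only yields $ab\in kl(\Q^{*})^{2}$ for the two square-free parts $a,b$, and writing $g=\gcd(a,b)$ this forces only $a=k_1l_1g$, $b=k_2l_2g$ for \emph{some} splitting $k=k_1k_2$, $l=l_1l_2$; when $kl$ is composite the desired ratio $k:l$ is therefore not even determined up to swapping, and the relation $lb=ka$ holds only for the one splitting $k_1=l_2=1$. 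The paper (following Rajan--Ramaroson \cite{RajanR}) avoids both difficulties by working not with a surface but with the Holm curve $lx(x^2-1)=ky(y^2-1)$: on that curve the identity $lA_x=kA_y$ holds exactly, as an identity of rational numbers, so only the coprimality conditions $(l,N_x)=1$, $(k,N_y)=1$ need to be arranged, and this is done by the $p$-adic valuation argument (Proposition \ref{prop:2} together with the order computations of Section 4), choosing points with ${\rm ord}_{p}(X)=-2m$, ${\rm ord}_{p}(Y)=-3m$ at each prime $p\mid kl$ so that ${\rm ord}_p(A_x)$ and ${\rm ord}_p(A_y)$ are even. Positivity of rank is likewise not obtained from an explicit non-torsion point at all, but by exhibiting at least $13$ rational points on the Holm curve and invoking Mazur's theorem together with $E(\R)[2]=\Z/2\Z$, sidestepping entirely the need to certify that any particular point has infinite order.

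Your closing inference --- that unbounded height along $\la P_0\ra$ forces the integers $M'$ to take infinitely many values --- is also a non sequitur: a single congruent number $n$ is already realized by infinitely many rational $x$ of unbounded height, because the twist $ny^{2}=x^{3}-x$ has positive rank when $n$ is congruent, so height growth by itself says nothing about the square-free parts varying. The paper closes precisely this gap with Faltings' theorem: if one pair occurred infinitely often, the corresponding points would lie on a curve of genus greater than $1$ (the degree-$2$ cover ramified at $(0,0)$ in the proof of Theorem \ref{th:2}), contradicting finiteness of its rational points. In sum, the proposal correctly identifies the difficulties but resolves none of them; as written it does not constitute a proof.
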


In this paper, following the method of Rajan and Ramaroson
~\cite{RajanR}, we will generalize their results to arbitrary
$\theta$-congruent numbers. Our main results are the following:

\begin{thm}Let $0<\theta<\pi$ such that $\cos\theta\in \Q$. If $k$ and $l$ are
positive, square-free coprime integers, then there exist infinitely
many pairs $(M,N)$ of $\theta$-congruent numbers such that $lN=kM$.
\end{thm}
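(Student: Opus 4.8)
The plan is to reduce the statement to a question about rational points on the family of elliptic curves $E_{n,\theta}$ and then exploit the freedom to scale $n$ by square factors. By Theorem~\ref{Fujiwara}, a square-free $n$ is $\theta$-congruent precisely when $E_{n,\theta}$ carries a rational point of order greater than $2$ (or, away from the finitely many exceptional values $n=1,2,3,6$, positive rank). The key observation is that the curves $E_{n,\theta}$ for varying $n$ are quadratic twists of one another: writing $y^2 = f(x)/(nr)$ with $f(x)=x(x+\cos\theta-1)(x+\cos\theta+1)$, replacing $n$ by $nt^2$ rescales the coordinates and produces an isomorphic curve over $\Q(\sqrt{t})$, so that $\theta$-congruence depends only on the square-free class of $n$. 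First I would therefore construct, by a suitable substitution, a single rational point on $E_{n,\theta}$ whose $n$-coordinate is a perfect square times a fixed base, so that one is free to choose the square-free part.

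The heart of the argument, following Rajan and Ramaroson, is to produce a one-parameter (rational) family of $\theta$-congruent numbers by pulling back a nontrivial point. Concretely, I would look for a parametrized rational point on $E_{n,\theta}$, i.e. express $x,y$ as rational functions of a parameter $u$ so that the curve equation forces $n$ to equal $k l$ times a square in $\Q(u)$, up to square-free part. The ratio condition $lN=kM$ is then arranged as follows: if a single parameter value gives a $\theta$-congruent number whose square-free part is $k l$, then setting $M = l\, c^2\cdot(\text{sqfree part})$ and $N = k\, c^2\cdot(\text{sqfree part})$ — more precisely choosing the common $\theta$-congruent ``seed'' $m$ with $lN=kM$ forced by taking $M,N$ in the ratio $k:l$ and each square-free — yields the relation $lN = kM$. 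The mechanism is that $M$ and $N$ share the same square-free core (coming from the coprimality and square-freeness of $k,l$), and the ratio $k:l$ is realized by absorbing $k$ and $l$ into square multipliers that do not change $\theta$-congruence.

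To get \emph{infinitely many} such pairs rather than one, I would invoke the standard fact that a rational elliptic curve with a rational point of infinite order has infinitely many rational points, and then arrange the construction to depend on a free rational parameter whose distinct values yield distinct square-free $n$. Equivalently, one shows the relevant twisted curve has positive rank and then uses the group law to generate infinitely many points whose $x$-coordinates have pairwise distinct square-free parts; each such part produces a new $\theta$-congruent number in the prescribed ratio. Because the parameter ranges over an infinite set of rationals and the square-free part is ``generically'' non-constant, one extracts infinitely many genuinely distinct pairs $(M,N)$.

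The main obstacle, and the step I expect to require the most care, is the square-free bookkeeping: ensuring that the multipliers introduced to realize the ratio $k:l$ are genuinely squares (so they preserve $\theta$-congruence via the twisting isomorphism) while simultaneously guaranteeing that the resulting $M$ and $N$ are themselves square-free and that the map from the parameter to the square-free part of $n$ takes infinitely many values. The dependence of $E_{n,\theta}$ on $\cos\theta=s/r$ through the factor $1/r$ introduces extra denominators compared with the classical $\theta=\pi/2$ case, so the construction of the explicit parametrized point — and the verification that it has order $>2$ (equivalently infinite order, to rule out the finitely many torsion-only exceptions $n=1,2,3,6$) — is where the generalization from Rajan and Ramaroson genuinely needs new input, and I would expect the bulk of the technical work to lie there.
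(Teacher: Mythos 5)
There is a genuine gap, and it sits at the very center of the problem. Write out what the ratio condition actually forces: if $M,N,k,l$ are all square-free and $\gcd(k,l)=1$, then $lN=kM$ implies $k\mid N$ and $l\mid M$, so $N=km$ and $M=lm$ for a common square-free $m$ coprime to $kl$. Hence the theorem is equivalent to producing infinitely many $m$ such that $km$ and $lm$ are \emph{simultaneously} $\theta$-congruent. These two numbers differ by the factor $k/l$, which is not a rational square (for $k\neq l$ coprime and square-free), so no quadratic-twist argument can transfer $\theta$-congruence from one to the other: the curves $E_{km,\theta}$ and $E_{lm,\theta}$ are twists of each other by the non-square $kl$, and positive rank of one says nothing about the other. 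Your proposed mechanism --- ``the ratio $k:l$ is realized by absorbing $k$ and $l$ into square multipliers that do not change $\theta$-congruence'' --- is exactly where this breaks: $k$ and $l$ are square-free, hence not squares, and the numbers $M=l\,c^2\cdot(\text{sqfree part})$, $N=k\,c^2\cdot(\text{sqfree part})$ you write down are not square-free, so they are not $\theta$-congruent numbers in the sense required; passing to their square-free parts destroys the relation $lN=kM$. Producing one $\theta$-congruent number with square-free part $kl$ does not yield the pair.

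The paper resolves this simultaneity problem by a different construction: the generalized Holm curve $H_\beta:\ lx(x+\beta-1)(x+\beta+1)=ky(y+\beta-1)(y+\beta+1)$ with $\beta=\cos\theta$, a single rational point of which encodes \emph{two} rational $\theta$-triangles whose areas $A_x$, $A_y$ satisfy $lA_x=kA_y$ by construction. One then needs three further ingredients, none of which appears in your outline: (a) a proof that this genus-one curve has infinitely many rational points, done by passing to its Jacobian $E_\beta$ and combining Mazur's torsion theorem with the exhibition of at least $13$ rational points; (b) a $p$-adic valuation argument (Propositions~\ref{prop:2} and~\ref{prop:3}) selecting infinitely many points for which $\mathrm{ord}_p(A_x)$ and $\mathrm{ord}_p(A_y)$ are even at every prime $p\mid kl$ and $A_x,A_y>0$, which is what upgrades $lA_x=kA_y$ to the relation $lN_x=kN_y$ between the square-free parts; and (c) an application of Faltings' theorem to a genus $\geq 2$ cover of $E_\beta$ to show the resulting pairs $(N_x,N_y)$ take infinitely many values --- your appeal to the square-free part being ``generically non-constant'' is precisely the point that requires this finiteness theorem. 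Without an analogue of the Holm curve (or some other device certifying two numbers in the non-square ratio $k:l$ as $\theta$-congruent at once), the proposal cannot reach the statement.
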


\begin{cor}Assumption as above, there exist infinitely many
square-free integers $N$ such that both $kN$ and $lN$ are
$\theta$-congruent numbers.
\end{cor}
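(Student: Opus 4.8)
The plan is to obtain the Corollary directly from the preceding Theorem, the only new ingredient being the coprimality of $k$ and $l$. The Theorem supplies infinitely many pairs $(M,N)$ of $\theta$-congruent numbers with $lN=kM$; I will show that each such pair is governed by a single positive square-free integer $t$ for which both $kt$ and $lt$ are $\theta$-congruent, and that distinct pairs produce distinct $t$. This $t$ is exactly the integer asserted to exist in the Corollary (there named $N$), so infinitely many pairs will yield infinitely many admissible integers.

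Fix a pair $(M,N)$ with $lN=kM$. Since $\gcd(k,l)=1$ and $k\mid lN$, we get $k\mid N$; write $N=kt$. Substituting into $lN=kM$ gives $M=lt$, so the pair has the shape $(M,N)=(lt,kt)$. Because $N=kt$ is square-free and $k$ is square-free, $t$ must itself be square-free and coprime to $k$; likewise $M=lt$ square-free forces $t$ coprime to $l$. Hence $t$ is a square-free integer coprime to $kl$, and $kt=N$, $lt=M$ are precisely the two given $\theta$-congruent numbers, so $t$ is of the kind required by the Corollary. Finally the assignment $(M,N)\mapsto t=N/k$ is injective: if two pairs share the same $t$ they share $N=kt$ and $M=lt$ and so coincide. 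Therefore the infinitely many pairs furnished by the Theorem give infinitely many distinct square-free $t$, which completes the deduction.

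It is worth being explicit that the genuine difficulty lies entirely inside the Theorem being invoked, not in the reduction above, which is routine bookkeeping. To prove that Theorem one uses Fujiwara's criterion (Theorem \ref{Fujiwara}(1)) to replace the statement ``$kt$ and $lt$ are $\theta$-congruent'' by the simultaneous existence of rational points of order greater than $2$ — equivalently, points with nonzero $y$-coordinate — on the two twists $E_{kt,\theta}$ and $E_{lt,\theta}$. Writing $\phi(x)=\tfrac1r x(x+\cos\theta-1)(x+\cos\theta+1)$ for the cubic so that $E_{n,\theta}$ is $ny^2=\phi(x)$, eliminating the common parameter $t$ from $kt\,y_1^2=\phi(x_1)$ and $lt\,y_2^2=\phi(x_2)$ turns the problem into the single condition that the scaled product $kl\,\phi(x_1)\phi(x_2)$ be a rational square, which cuts out an auxiliary double cover carrying a positive-dimensional supply of rational points. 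The main obstacle is then arithmetic rather than geometric: from the rational values of $t$ so produced one must extract infinitely many whose square-free part is coprime to $kl$, so that $kt$ and $lt$ are honestly square-free and thus admissible $\theta$-congruent numbers, and so that the corresponding pairs are genuinely distinct. Controlling these square-free parts, rather than merely exhibiting points, is the delicate step.
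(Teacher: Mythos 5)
Your reduction of the Corollary to the preceding Theorem --- using $\gcd(k,l)=1$ to write $N=kt$, $M=lt$ with $t$ square-free, and observing that $(M,N)\mapsto t=N/k$ is injective so infinitely many pairs yield infinitely many admissible $t$ --- is correct and is exactly the (unwritten) deduction the paper intends. (Your closing sketch of how the Theorem itself would be proved does not match the paper, which works with the generalized Holm curve $lx(x+\beta-1)(x+\beta+1)=ky(y+\beta-1)(y+\beta+1)$, its Jacobian, and valuation properties of points rather than a square condition on $kl\,\phi(x_1)\phi(x_2)$; but that aside is not part of the proof of the Corollary and does not affect its validity.)
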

\section{Generalized Holm's Curve and its jacobian}
Let $\beta$ be a rational number such that $\beta\neq\pm1$. Let
$k,l$ be coprime positive integers such that $k\neq l$. We call the
curve
$$H_{\beta}:\ lx(x+\beta-1)(x+\beta+1)=ky(y+\beta-1)(y+\beta+1)
$$
the generalized Holm's curve since for $\beta=0$, $H_{\beta}$ was
considered by Holm~\cite{Holm} in a slight different form.

   In this section, we will study $H_{\beta}$ and its jacobian
   $E_{\beta}$. We will show that $H_{\beta}$ is a smooth
   irreducible curve of genus one with infinitely many rational
   points.
\begin{prop}$H_{\beta}$ is a smooth
   irreducible curve of genus one.
\end{prop}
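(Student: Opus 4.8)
The plan is to regard $H_\beta$ as a projective plane cubic and prove that it is nonsingular; smoothness then yields irreducibility and, by the genus--degree formula $(d-1)(d-2)/2 = 1$ for $d=3$, genus one. Write $f(t) = t(t+\beta-1)(t+\beta+1) = t^3 + 2\beta t^2 + (\beta^2-1)t$, so that $H_\beta$ is $lf(x) = kf(y)$, and let
$$F(X,Y,Z) = lX(X+(\beta-1)Z)(X+(\beta+1)Z) - kY(Y+(\beta-1)Z)(Y+(\beta+1)Z)$$
be its homogenization. On the line at infinity $Z=0$ one computes $F_X = 3lX^2$ and $F_Y = -3kY^2$, so a singular point there would force $X = Y = 0$; hence there are no singular points at infinity, and the three points cut out by $lX^3 = kY^3$ are smooth.

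In the affine chart $Z=1$ set $G(x,y) = lf(x) - kf(y)$. A singular point satisfies $G = G_x = G_y = 0$, that is, $f'(x) = f'(y) = 0$ together with $lf(x) = kf(y)$. Since $f'(t) = 3t^2 + 4\beta t + (\beta^2-1)$ has discriminant $4(\beta^2+3) > 0$, it has two distinct roots $t_1, t_2$, the critical points of $f$, and any affine singular point has $x,y \in \{t_1,t_2\}$. Reducing $f$ modulo $f'$ gives, at a critical point, $f(t) = -\tfrac{2}{9}\big[(\beta^2+3)t + \beta(\beta^2-1)\big]$, whence, writing $P = \beta(\beta^2-9)$ and $Q = (\beta^2+3)^{3/2} > 0$,
$$f(t_1) = -\tfrac{2}{27}(P+Q), \qquad f(t_2) = -\tfrac{2}{27}(P-Q).$$
The computation I would lean on is the identity $Q^2 - P^2 = (\beta^2+3)^3 - \beta^2(\beta^2-9)^2 = 27(\beta^2-1)^2$, which, because $\beta \neq \pm 1$, yields the strict inequality $Q > |P|$.

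It then remains to exclude $x,y \in \{t_1,t_2\}$. For the diagonal choices $x = y = t_i$ the relation $lf(x) = kf(y)$ reads $(l-k)f(t_i) = 0$; as $k \neq l$ this needs $f(t_i) = 0$, i.e. $P \pm Q = 0$, which is impossible since $Q > |P|$ (equivalently, $f$ has the three distinct roots $0, 1-\beta, -1-\beta$ and hence no double root). For the cross choices the relation becomes $(l+k)Q = \pm(k-l)P$; but $l+k > |k-l|$ (as $k,l > 0$) combined with $Q > |P|$ gives $(l+k)Q > |k-l|\,|P| \geq |(k-l)P|$, a contradiction. Thus $H_\beta$ has no singular point over $\C$, so it is smooth of genus one; and a smooth projective plane curve is irreducible, since a nontrivial decomposition would produce, by Bézout, an intersection point that is necessarily singular. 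The only delicate step is the identity $Q^2 - P^2 = 27(\beta^2-1)^2$: it is precisely what converts the hypothesis $\beta \neq \pm 1$ into the separation $Q > |P|$ that simultaneously kills the diagonal and the cross singularities.
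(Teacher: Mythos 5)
Your proof is correct and follows essentially the same route as the paper: apply the Jacobian criterion, observe that any affine singular point must have both coordinates among the two critical points of $f(t)=t(t+\beta-1)(t+\beta+1)$, rule out the four resulting candidates, and check the three points at infinity. The only difference is in how the four candidates are excluded: the paper reads off from the graph that the local maximum value of $f$ is positive and the local minimum value is negative (so that $lf(\alpha_i)-kf(\alpha_j)\neq 0$ for positive $k\neq l$), whereas you compute the critical values explicitly and use the identity $Q^{2}-P^{2}=27(\beta^{2}-1)^{2}$ to the same effect --- a more explicit but equivalent argument.
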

\begin{proof}
It is well-known that a smooth cubic is automatically irreducible of
genus one. So we only need to check the smoothness. To do this, we
will use the jacobian criterion.

Let $G(x,y)=lx(x+\beta-1)(x+\beta+1)-ky(y+\beta-1)(y+\beta+1)$. The
equation
\begin{equation}\label{eq:2}
 \left\{
   \begin{aligned}
     \frac{\partial G}{\partial
    x}&=l(x(x+\beta-1)+x(x+\beta+1)+(x+\beta-1)(x+\beta+1))=0\\
  \frac{\partial G}{\partial y}&=-k(y(y+\beta-1)+y(y+\beta+1)+(y+\beta-1)(y+\beta+1))=0  \\
   \end{aligned}
  \right.
  \end{equation}
has four solutions
$(\alpha_1,\alpha_1),(\alpha_1,\alpha_2),(\alpha_2,\alpha_1),(\alpha_2,\alpha_2),$
where we assume $\alpha_1<\alpha_2$. Since $\alpha_1$ and $\alpha_2$
are the extreme points of cubic function
$u=v(v+\beta-1)(v+\beta+1)$, from the graph, one can see
$\alpha_1(\alpha_1+\beta-1)(\alpha_1+\beta+1)>0$ and
$\alpha_2(\alpha_2+\beta-1)(\alpha_2+\beta+1)<0$. Since $k\neq l$
are positive integers, the four points
$(\alpha_1,\alpha_1),(\alpha_1,\alpha_2),(\alpha_2,\alpha_1),(\alpha_2,\alpha_2)$
do not satisfy the equation $G(x,y)=0$. Thus $H_{\beta}$ is smooth
on the affine part. It is easily checked that $H_{\beta}$ is also
smooth at the infinite points
$(\sqrt[3]{k/l},1,0),(\sqrt[3]{k/l}\rho,1,0),(\sqrt[3]{k/l}\rho^2,1,0),$
where $\rho$ is a primitive cubic root of unity.
\end{proof}

Using the method of~\cite{ST} (p.23), one can change $H_{\beta}$ to
its jacobian $E_{\beta}$: $$Y^2=X^3-
 \frac{1}{3} k^2 l^2 (3 + \beta^2)^2 X+\frac{1}{27} k^2 l^2 (2k l
\beta^2 (-9 + \beta^2)^2 +
    27 k^2 (-1 + \beta^2)^2 + 27 l^2 (-1 + \beta^2)^2)
$$  by identifying $(0,0)$ to the zero element.
The computation is done by Mathematica 7.0, see the file ``Eq.nb" in
the supplementary materials.

\begin{prop}\label{prop:1}(i) The discriminant of $E_\beta$ is
$$-\frac{16}{27} k^4 l^4 (-4 k^2 l^2 (3 + \beta^2)^6 + (2 k l \beta^2
\ (-9 + \beta^2)^2 + 27 k^2 (-1 + \beta^2)^2 +
     27 l^2 (-1 + \beta^2)^2)^2).$$
     (ii) The $j$-invariant of $E_\beta$ is
     $$-\frac{6912 k^2 l^2 (3 + \beta^2)^6}{-4 k^2 l^2 (3 + \beta^2)^6 + \
(2 k l \beta^2 (-9 + \beta^2)^2 + 27 k^2 (-1 + \beta^2)^2 +
    27 l^2 (-1 + \beta^2)^2)^2}
     $$
     (iii) The rational transformation relating $H_{\beta}$ and
     $E_{\beta}$ are
     \begin{equation*}\begin{aligned}X&= \frac{
 k l (-3 l y +
    k (3 x + 6 \beta - (3 x + 4 y) \beta^2 - 6 \beta^3) +
    l \beta (-6 + 4 x \beta + 3 \beta (y + 2 \beta)))}{
 3 l x - 3 k y},\\
 Y &= -\frac{
    k (k -
    l) l (-1 + \beta^2) (k l (x - y) (1 + 2 (x + y) \beta +
       3 \beta^2) - l^2 x (-1 + \beta (x + \beta)) +
    k^2 y (-1 + \beta (y + \beta)))}{(l x - k y)^2},
     \end{aligned}
     \end{equation*}

\begin{equation*}\begin{aligned}
&\ \ \ \ \ \ \ \ x =\\ -&\frac{3 k (-1 + \beta^2) (l (2 k^2 \beta^2
(-1 + \beta^2) +
          3 l^2 (-1 + \beta^2)^2 +
          k l (3 + 16 \beta^2 - 3 \beta^4)) +
       6 \beta Y -
       3 (k + l - (k - 3 l) \beta^2) X)}{9 (k +
       l) (-1 + \beta^2) Y + \beta (k l (9 k^2 (-1 + \beta^2)^2 +
          9 l^2 (-1 + \beta^2)^2 -
          2 k l (9 + 5 \beta^2 (-6 + \beta^2))) -
       6 k l (3 + 5 \beta^2) X + 18
X^2)},\\ &\ \ \ \ \ \ \ \ y=\\ -&\frac{3 l (-1 + \beta^2) (k (2 l^2
\beta^2 (-1 + \beta^2) +
          3 k^2 (-1 + \beta^2)^2 +
          k l (3 + 16 \beta^2 - 3 \beta^4)) +
       6 \beta Y -
       3 (k + l + (3 k - l) \beta^2) X)}{9 (k +
       l) (-1 + \beta^2) Y + \beta (k l (9 k^2 (-1 + \beta^2)^2 +
          9 l^2 (-1 + \beta^2)^2 -
          2 k l (9 + 5 \beta^2 (-6 + \beta^2))) -
       6 k l (3 + 5 \beta^2) X + 18 X^2)}
.\end{aligned}\end{equation*}

(iv):Under the above transformation, the nine rational points of
$H_\beta$ correspond to the nine rational points of $E_{\beta}$ as
follows:
\begin{equation*}\begin{aligned}
&P_1: (-\beta-1,-\beta+1)\leftrightarrow(\frac{1}{3} k l (3 +
\beta^2),k (k - l) l (-1 + \beta^2)),
\\
&P_2:(0,-\beta+1)\leftrightarrow(\frac{1}{3} l (3 l (1 + \beta)^2 -
2 k \beta (3 + \beta)),(k - l) l (1 + \beta) (k (-1 + \beta) - l (1
+ \beta)^2)), \\
&P_3: (-\beta+1,-\beta+1)\leftrightarrow(\frac{1}{3} k l (-3
+ (-6 + \beta) \beta),k l (k + l) (-1 + \beta^2)),\\
 &P_4: (-\beta-1,0)\leftrightarrow(\frac{1}{3} k (3 k
(-1 + \beta)^2 - 2 l (-3 + \beta) \beta),k (k - l) (-1 + \beta) (l +
k (-1 + \beta)^2 + l \beta)),\\
&P_5:(0,0)\leftrightarrow O,
\\ &P_6:(-\beta+1,0)\leftrightarrow (\frac{1}{3} k (3 k (1 +
\beta)^2 - 2 l \beta (3 + \beta)),k (k - l) (1 + \beta) (l - l \beta
+ k (1 + \beta)^2)),
\\ &P_7: (-\beta-1,-\beta-1)\leftrightarrow(\frac{1}{3} k l (-3 + \beta (6 + \beta)),-k l (k + l) (-1 +
\beta^2)),
\\
&P_8: (0,-\beta-1)\leftrightarrow(\frac{1}{3} l (3 l (-1 + \beta)^2
- 2 k (-3 + \beta) \beta),-(k - l) l (-1 + \beta) (l (-1 + \beta)^2
+ k (1 + \beta))),
\\ &P_9: (-\beta+1,-\beta-1)\leftrightarrow(\frac{1}{3} k l (3 + \beta^2),-k (k - l) l (-1 +
\beta^2)).
\end{aligned}\end{equation*}

(v) $E_\beta(\R)[2]=\Z/2\Z$.
\end{prop}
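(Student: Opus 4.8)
The plan is to read off the real two-torsion of $E_\beta$ directly from the real roots of its Weierstrass cubic, and then to decide how many of those roots are real by determining the sign of the discriminant already computed in part (i).

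First I would recall the standard description of two-torsion. Writing the equation of $E_\beta$ as $Y^2=f(X)$ with $f$ a monic cubic, the nontrivial points of $E_\beta[2]$ are exactly the points $(X_0,0)$ with $f(X_0)=0$. Over $\R$ this yields $E_\beta(\R)[2]\cong(\Z/2\Z)^2$ when $f$ has three real roots and $E_\beta(\R)[2]\cong\Z/2\Z$ when $f$ has a single real root. Since the linear coefficient $A=-\frac{1}{3}k^2l^2(3+\beta^2)^2$ is strictly negative, $f'$ has two real zeros and $f$ is genuinely non-monotonic, so a priori either case could occur; the outcome is governed by the sign of the discriminant. Because the curve discriminant $\Delta$ equals $16$ times the discriminant of the polynomial $f$, the cubic has exactly one real root precisely when $\Delta<0$. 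Thus the whole claim reduces to proving $\Delta<0$.

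Next I would extract the sign from the formula in (i). Writing $S=2kl\beta^2(\beta^2-9)^2+27k^2(\beta^2-1)^2+27l^2(\beta^2-1)^2$, the factor inside the big parentheses of $\Delta$ is $S^2-4k^2l^2(3+\beta^2)^6=(S-2kl(3+\beta^2)^3)(S+2kl(3+\beta^2)^3)$. The second factor is manifestly positive. For the first I would use the polynomial identity $\beta^2(\beta^2-9)^2+27(\beta^2-1)^2=(\beta^2+3)^3$, which is checked at once by setting $t=\beta^2$ and expanding both sides to $t^3+9t^2+27t+27$. This lets me write $2kl(3+\beta^2)^3=2kl\beta^2(\beta^2-9)^2+54kl(\beta^2-1)^2$, and the cross terms cancel to give
$$S-2kl(3+\beta^2)^3=27\,(k^2+l^2-2kl)(\beta^2-1)^2=27\,(k-l)^2(\beta^2-1)^2.$$
Since $k\neq l$ and $\beta\neq\pm1$, this is strictly positive, so both factors are positive and $\Delta=-\frac{16}{27}k^4l^4\cdot(\text{positive})<0$.

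With $\Delta<0$ in hand, $f$ has exactly one real root (and $E_\beta$ is nonsingular since $\Delta\neq0$), whence $E_\beta(\R)[2]=\Z/2\Z$, as claimed. The only delicate point is spotting the factorization: the identity $\beta^2(\beta^2-9)^2+27(\beta^2-1)^2=(\beta^2+3)^3$ is exactly what forces the cancellation and isolates the essential factor $(k-l)^2(\beta^2-1)^2$, so the hypotheses $k\neq l$ and $\beta\neq\pm1$ enter the argument precisely at this step.
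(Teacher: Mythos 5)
Your argument for part (v) is correct, but it takes a genuinely different route from the paper's. The paper (which, like you, delegates (i)--(iv) to a symbolic computation in Mathematica) proves (v) by locating the critical points of $f(X)=X^3+aX+b$ at $X=\pm\frac{1}{3}kl(3+\beta^2)$ and evaluating $f$ at the local minimum $X=+\frac{1}{3}kl(3+\beta^2)$, where it equals $k^2(k-l)^2l^2(\beta^2-1)^2>0$; a positive local minimum value forces a single real root. You instead read the answer off the sign of the discriminant from (i), factoring $S^2-4k^2l^2(3+\beta^2)^6=(S-2kl(3+\beta^2)^3)(S+2kl(3+\beta^2)^3)$ and using the identity $\beta^2(\beta^2-9)^2+27(\beta^2-1)^2=(\beta^2+3)^3$ (which I have checked) to reduce the first factor to $27(k-l)^2(\beta^2-1)^2$. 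The two computations are in fact two faces of the same fact: for a depressed cubic with critical points $\pm c$ one has $\mathrm{disc}(f)=-27f(c)f(-c)$, so the paper's positive value at the local minimum is exactly what makes your discriminant negative. The paper's evaluation is the lighter calculation and does not depend on the discriminant formula; your version rests part (v) on part (i) (harmless, since (i) is established first by direct computation), but in exchange it simultaneously exhibits $\Delta\neq 0$, i.e.\ the smoothness of $E_\beta$, and makes completely transparent where the hypotheses $k\neq l$ and $\beta\neq\pm1$ enter. Neither you nor the paper proves (i)--(iv) by hand, so your proposal matches the paper in treating those parts as routine verification.
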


\begin{proof} The (i), (ii), (iii) (iv) can be checked by Mathematica
7.0, see files ``invariants.nb", ``trans.nb", ``coordinates.nb" in
the supplementary materials. We only give the proof of (v). Let
$E_{\beta}: Y^2=f(X)$. Since
$$f'(\pm kl(3+\beta^2)/3)=0
$$
and
$$f(\frac{1}{3}kl(3+\beta^2))=k^2 (k - l)^2 l^2 (-1 + \beta^2)^2>0,
$$
$f(X)=0$ has only one real root. Therefore, $E_\beta(\R)[2]=\Z/2\Z$.
\end{proof}

The expression of addition of two points of $E_{\beta}$ is quite
complicated since it has three parameters $k,l,\beta$. So it is hard
to check a point on $E_{\beta}$ has infinite order by computer.
Instead, we will give the following geometric and intuitive proof.
The proof is based on Mazur's famous result on the torsion group of
rational points of elliptic curves over $\Q$ (see \cite{Mazur1},
~\cite{Mazur2}).
\begin{prop} $E_{\beta}(\Q)$ has positive rank. Equivalently,
$H_{\beta}$ has infinitely many rational points.
\end{prop}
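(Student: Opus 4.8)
The plan is to prove that $E_{\beta}(\Q)$ has positive rank by showing that the nine rational points listed in Proposition~\ref{prop:1}(iv) cannot all be torsion, so at least one of them has infinite order. The strategy exploits the tension between two facts: on the one hand we have produced nine explicit rational points on $E_{\beta}$, and on the other hand Mazur's theorem severely restricts the possible torsion subgroups of an elliptic curve over $\Q$. If every one of the nine points were torsion, the subgroup they generate would be a torsion group containing a large and rigidly structured set of points, and the aim is to derive a contradiction with Mazur's classification.

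First I would record the $2$-torsion information already in hand: by Proposition~\ref{prop:1}(v) we have $E_{\beta}(\R)[2]=\Z/2\Z$, hence $E_{\beta}(\Q)[2]=\Z/2\Z$ as well (the rational $2$-torsion injects into the real $2$-torsion). This immediately rules out the torsion groups in Mazur's list that contain $\Z/2\Z\times\Z/2m\Z$, so the torsion subgroup of $E_{\beta}(\Q)$ must be cyclic, of order $N\in\{1,2,3,4,5,6,7,8,9,10,12\}$. Next I would examine the geometric configuration of $P_1,\dots,P_9$ under the group law. The labelling and the symmetric $3\times 3$ arrangement of the base points on $H_{\beta}$ (points of the form $(-\beta\pm1\text{ or }0,\ -\beta\pm1\text{ or }0)$) strongly suggest that these nine points form a coset structure compatible with the $3$-torsion: indeed the inflection-type points of a plane cubic are exactly its $3$-torsion, and $P_5\leftrightarrow O$ is the origin. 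The key computation is to identify the group-law relations among the $P_i$ — in particular to check that certain of them are $3$-torsion and that together with $P_5=O$ they generate a subgroup isomorphic to $\Z/3\Z\times\Z/3\Z$, or else that some $P_i$ visibly fails to satisfy any of the admissible torsion relations.

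The crux of the argument, and the main obstacle, is turning the geometric picture into a rigid algebraic statement strong enough to contradict Mazur. If one can show that $P_1,\dots,P_9$ generate a subgroup containing a copy of $\Z/3\Z\times\Z/3\Z$ (for instance by verifying that three collinear triples among the $P_i$ force the full $3$-torsion to be rational), then the torsion subgroup would contain $(\Z/3\Z)^2$, which is not cyclic and does not appear in Mazur's list; combined with the $2$-torsion this is impossible, so the rank must be positive. The delicate part is that these relations depend on the parameters $k,l,\beta$, so I must argue that the required incidences hold \emph{identically} in $k,l,\beta$ (subject to $\beta\neq\pm1$, $k\neq l$, both positive), rather than only for special values; this is where a careful but finite symbolic check — analogous to the Mathematica verifications already used for parts (i)--(iv) — would be invoked.

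An alternative route, should the full $(\Z/3\Z)^2$ structure be hard to pin down, is to select one explicit point, say $P_3$ or $P_1$, and show directly that it has infinite order: using $E_{\beta}(\Q)[2]=\Z/2\Z$ and the cyclicity it forces, one need only rule out that $P_i$ has order dividing $12$ by checking that $mP_i\neq O$ for the finitely many relevant $m$. Because the order would then be bounded independently of $\beta$, a specialization-and-genericity argument shows that if $mP_i=O$ held identically it would already fail at a single generic value of $\beta$, giving the contradiction. Either way the conclusion is that $E_{\beta}(\Q)$ has a point of infinite order, hence positive rank, and by the birational equivalence of Proposition~\ref{prop:1}(iii) the curve $H_{\beta}$ has infinitely many rational points.
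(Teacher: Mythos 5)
Your starting point matches the paper's: use Proposition~\ref{prop:1}(v) together with Mazur's theorem to conclude that if the rank were zero then $E_{\beta}(\Q)$ would be cyclic of order at most $12$. But neither of your two proposed ways to finish actually closes the argument. The $(\Z/3\Z)^2$ route rests on a false premise: the nine points $P_1,\dots,P_9$ are the base points of the pencil spanned by the two degenerate cubics $x(x+\beta-1)(x+\beta+1)=0$ and $y(y+\beta-1)(y+\beta+1)=0$, and since $P_5=(0,0)$ is \emph{not} an inflection point, the collinearity of each row and column only yields relations of the form $P_i+P_j+P_k=P_0$ for a fixed auxiliary point $P_0$ (the third intersection of the tangent at $P_5$). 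These relations impose no torsion whatsoever --- generically the subgroup generated by the nine points is free of rank up to $4$ --- and indeed full rational $3$-torsion is impossible over $\Q$ (Weil pairing, or Mazur again), so the configuration you hope to exhibit cannot exist. Without that structure, nine distinct points do not contradict the Mazur bound, since a cyclic group of order $10$ or $12$ accommodates them comfortably; one needs at least $13$ points, or genuine extra relations, and you produce neither.

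The alternative route has the quantifiers reversed. Showing that $[m]P_i=O$ fails for a \emph{generic} value of $\beta$ (or of $k,l$) only proves the relation is not an identity; it does not rule out that $P_i$ is torsion for the particular rational $\beta=\cos\theta$ and the particular $k,l$ at hand, and specialization can only create new relations, never destroy them. Since the theorem must hold for every admissible triple $(\beta,k,l)$, a genericity argument for a single named point cannot suffice (and some of these points do satisfy special relations, e.g.\ $P_1+P_9=O$). The paper's actual resolution is to manufacture four \emph{new} rational points: after normalizing to $0<\beta<1$, $l>k$, it constructs $P_0$ as the third intersection of the tangent at $P_5$ and then forms $-P_2,-P_3,-P_4,-P_8$ by drawing chords through $P_0$; a check that these points are not $2$-torsion (via the explicit coordinates in Proposition~\ref{prop:1}(iv)) and a look at the real picture show they are disjoint from $\{P_1,\dots,P_9\}$, giving $|H_{\beta}(\Q)|\geq 13>12$. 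That concrete construction of extra points is the missing ingredient in your proposal.
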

\begin{proof}By (v) of Proposition \ref{prop:1} and Mazur's
result, if the rank of $E_{\beta}(\Q)$ is 0, then
$|E_{\beta}(\Q)|\leq 12$. So in order to prove $E_{\beta}(\Q)$ has
positive rank, we will show it has at least 13 rational points.
Since $E_{\beta}$ and $H_{\beta}$ are birationally equivalent over
$\Q$, it suffices to check it for $H_{\beta}$. Changing the
coordinate $(x+\beta, y+\beta)$ to $(x,y)$, we get a new equation
$$H_{\beta}:l(x-\beta)(x-1)(x+1)=k(y-\beta)(y-1)(y+1).$$

From this point to the end of the proof, we will use the above
equation to show $|H_{\beta}(\Q)|\geq 13$. To do this, we can assume
$0<\beta<1$ and $l>k$. The reason is as follows. We omitted the case
of $\beta=0$ since it was already done in \cite{RajanR}. For
$\beta<0$, change coordinate $(x,y)$ to $(-x,-y)$ and for $\beta>1$,
change coordinate $(x,y)$ to
$(\frac{1}{2}(\beta+1)(x+1)-1,\frac{1}{2}(\beta+1)(y+1)-1)$.

Letting $(x-\beta)(x-1)(x+1)=(y-\beta)(y-1)(y+1)=0$, we get nine
distinct rational points of $H_{\beta}$:
\begin{equation*}\begin{aligned}
&P_1:(-1,-1),\ \ \ &P_2:(-1,\ \beta),\ \ \ \ &P_3:(-1,+1),\\
&P_4:(\ \beta,-1),\ \ &P_5:(\ \beta, \ \ \beta),\ \ \ \ \  &P_6:(\ \beta,+1),\\
&P_7:(+1,-1),\ \ \ &P_8:(+1,\ \beta),\ \ \ \ &P_9:(+1,+1)
.\end{aligned}\end{equation*}

The tangent line of $H_{\beta}$ at $(\beta,\beta)$:
$l(x-\beta)-k(y-\beta)=0$ meets $H_{\beta}$ at the new point:
\begin{equation*}\begin{aligned}
P_0:\ (+\frac{l-k}{l+k}\beta,\ -\frac{l-k}{l+k}\beta).
\end{aligned}\end{equation*}
\begin{picture}(200,200)
\put(0,100){\vector(1,0){200}} \put(100,0){\vector(0,1){200}}
\put(180,180){\circle*{2}} \put(180,185){\makebox(10,10){$P_3$}}
\put(180,140){\circle*{2}} \put(180,155){\makebox(10,10){$P_6$}}
\put(180,20){\circle*{2}} \put(180,25){\makebox(10,10){$P_9$}}
\put(140,180){\circle*{2}} \put(140,185){\makebox(10,10){$P_2$}}
\put(140,140){\circle*{2}} \put(140,155){\makebox(10,10){$P_5$}}
\put(140,20){\circle*{2}} \put(140,25){\makebox(10,10){$P_8$}}
\put(20,180){\circle*{2}} \put(20,185){\makebox(10,10){$P_1$}}
\put(20,140){\circle*{2}} \put(20,155){\makebox(10,10){$P_4$}}
\put(20,20){\circle*{2}} \put(20,25){\makebox(10,10){$P_7$}}
\put(120,80){\circle*{2}} \put(120,85){\makebox(10,10){$P_{0}$}}
\end{picture}
\put(-105,-20){\makebox(20,10)[lb]{\bf{Figure}}}

From the group law of cubic curve (p.18-22 of~\cite{ST}), the
negative $-P$ is the third intersection of cubic curve and the line
passing through $P$ and $P_0$, where $P$ is an arbitrary point of
$H_\beta$. So we have $P_1+P_9=O$. From the figure above, one can
see $P_6+P_7$ might be $O$. By (iv) of Proposition ~\ref{prop:1}, it
is easy to see the $Y$-coordinates of $P_2,P_3,P_4,P_8$ do not equal
to zero under the assumption $0<\beta<1$ and $l>k>0$. So they are
not two torsion points, i.e, $-P_i\neq P_i\ (i=2,3,4,8)$. Drawing
the lines connecting $P_i\ (i=2,3,4,8)$ and $P_0$, one can easily
see the set $\{-P_2, -P_3,-P_4, -P_8\}$ is disjoint with the set
$\{P_i|1\leq i\leq 9\}$ from the figure above. Hence
$|H_{\beta}(\Q)|\geq 13$, which concludes the proof.
\end{proof}

 In what following, we will apply the above results to the ratio of
 $\theta$-congruent numbers. From this point on, we will fixed an angle
 $\theta$ with $0<\theta<\pi$ and let $\beta=\cos\theta$. We will
 use notations $H_{\theta}$, $E_\theta$ instead of $H_{\beta}$,
 $E_\beta$, respectively. Also we will fixed coprime, unequal positive integers $k, l$. In addition,
 we require $k, l$ are square-free. Recall that $r$ is the denominator of $\cos\theta$.

 Let
$A_x=x((x+\beta)^2-1)/r$ and
 $A_y=y((y+\beta)^2-1)/r$. Then every rational point $(x,y)$ on $H_{\theta}$
 with $A_x>0$
 gives rise two rational $\theta$-triangles
 whose areas are in the ratio
 $$\frac{A_x }{A_y}=\frac{k}{l}.
 $$
 Indeed, if $A_x$ is positive, the rational
 $\theta$-triangle $$\{(x+\beta)^2-1, 2x, 1+(x+\beta)^2-2(x+\beta)\beta\}\
 \ (\mathrm{for}\
 \ x>0)$$ or $$\{1-(x+\beta)^2, -2x, 1+(x+\beta)^2-2(x+\beta)\beta\}\ \
 (\mathrm{for}\ \
 x<0)$$ has area $A_xr\sin\theta$, and similar for $A_y$ since it is also positive. Therefore, by the definition of
$\theta$-congruent numbers, every rational point $(x,y)$ on
$H_{\theta}$ with $A_x>0$ produces a pair of congruent numbers,
$(N_x,N_y)$ when we take the square-free parts $N_x$ of $A_x$ and
$N_y$ of $A_y$ respectively.

In the sequel, demonstrating the transformations in (iii) of
proposition~\ref{prop:1}, we will show that there are infinitely
many points $(X,Y)$ on $E_{\theta}(\mathbb{\Q})$ such that the
corresponding point $(x,y)$ on $H_{\theta}$ satisfies $$A_x>0,\
A_y>0,\ (l,N_x)=1,\ (k,N_y)=1.$$ Taking the square-free part of both
sides of $lA_x=kA_y$, we will get
\begin{equation}\label{eq:5}lN_x=kN_y.
\end{equation} To show the infinity of such points on
$E_{\theta}(\mathbb{\Q})$, we use the valuation properties of
elliptic curves over local fields.

\section{The valuation properties of global points on elliptic curves}
Let $E$ be an elliptic curve over $\Q$ defined by the Weierstrass
equation: $$Y^2=X^3+aX+b,\ a,b\in\Q.$$ Let $S=\{p_1,p_2,...,p_t\}$
be a set of prime numbers.

Given positive integers $m_1,m_2,...,m_t$, let
$$U_{m_1,m_2,...,m_t}(E)=\{P\in E(\Q)|{\rm ord}_{p_i}(X(P))=-2m_i,\ {\rm ord}_{p_i}(Y(P))=-3m_i,\ {\rm where}\
1\leq i\leq t\}.
$$
Improving the Proposition 3.3 of~\cite{RajanR} a bit more, we get
\begin{prop}\label{prop:2}If $E(\Q)$ has positive rank, then there exists an integer
$N$ such that $U_{m_1,m_2,...,m_t}(E)\neq \varnothing$, for all
$m_1,m_2,...,m_t\geq N$.
\end{prop}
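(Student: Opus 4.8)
The plan is to produce the required points as integer multiples of a single point of infinite order, using the theory of the formal group and the kernel-of-reduction filtration to control the $p_i$-adic valuations exactly. First I would recall the relevant local structure at a prime $p$. For $P\in E(\Q_p)$ the two conditions ${\rm ord}_p(X(P))=-2m$ and ${\rm ord}_p(Y(P))=-3m$ are equivalent to $P$ lying in the filtration with $P\in E_m(\Q_p)\setminus E_{m+1}(\Q_p)$; equivalently the formal parameter $t(P)=-X(P)/Y(P)$ has ${\rm ord}_p(t(P))=m$. Writing $v_p(P):={\rm ord}_p(t(P))$ for $P\in E_1(\Q_p)$, the key analytic input is that the formal logarithm induces a valuation-preserving isomorphism $E_n(\Q_p)\to p^n\Z_p$ once $n>1/(p-1)$, intertwining $[m]$ with multiplication by $m$; consequently, in this range,
$$v_p([m]P)={\rm ord}_p(m)+v_p(P).$$

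Next I would fix a point $Q\in E(\Q)$ of infinite order, available since $E(\Q)$ has positive rank, and pass to a suitable multiple $R=cQ$. Because each $\tilde E_{\rm ns}(\F_{p_i})$ and each quotient $E_1(\Q_{p_i})/E_2(\Q_{p_i})$ is finite, I can choose a single positive integer $c$ so that $R=cQ$ lies in $E_2(\Q_{p_i})$ for every $i$; then $R$ still has infinite order and $n_i:=v_{p_i}(R)\geq 2>1/(p_i-1)$ for all $i$, so the displayed formula applies at each $p_i$. Setting $N=\max_i n_i$, given any $m_1,\ldots,m_t\geq N$ I put $M=\prod_{i=1}^t p_i^{\,m_i-n_i}$; since the $p_i$ are distinct, ${\rm ord}_{p_i}(M)=m_i-n_i$, whence $v_{p_i}(MR)=(m_i-n_i)+n_i=m_i$ for each $i$. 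The point $MR\neq O$ therefore satisfies ${\rm ord}_{p_i}(X(MR))=-2m_i$ and ${\rm ord}_{p_i}(Y(MR))=-3m_i$, that is, $MR\in U_{m_1,\ldots,m_t}(E)$, proving the claim.

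The main obstacle is the precise valuation bookkeeping rather than the overall architecture: one must justify $v_p([m]P)={\rm ord}_p(m)+v_p(P)$ \emph{exactly} and not merely with an inequality, which is where the improvement over Proposition 3.3 of \cite{RajanR} lies, and this forces attention to small primes, in particular $p=2$, where $1/(p-1)=1$ and the formal logarithm is only an isomorphism from $E_2$ onward. Pushing $R$ into $E_2(\Q_{p_i})$ at the outset is exactly what removes this difficulty uniformly across $S$; once the valuation-preserving range is secured, the remainder is a direct Chinese-Remainder-style choice of the exponent $M$.
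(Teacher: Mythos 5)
Your argument is correct in substance and is genuinely more self-contained than the paper's. The paper does not reprove the core valuation statement at all: it passes to a global minimal Weierstrass model $E'$, cites Propositions 3.1 and 3.3 of \cite{RajanR} to get $U_{m_1,\dots,m_t}(E')\neq\varnothing$ for all large $m_i$, and then translates back through the coordinate change $X=u^2X'+f$, $Y=u^3Y'+gu^2X'+h$, which for large $m_i$ just shifts each $m_i$ by $s_i={\rm ord}_{p_i}(u)$. You instead unwind what that citation contains: the identification of $U$ with the layers $E_m\setminus E_{m+1}$ of the kernel-of-reduction filtration, the exact formula $v_p([m]P)={\rm ord}_p(m)+v_p(P)$ via the valuation-preserving formal logarithm on $E_n(\Q_p)$ for $n>1/(p-1)$, and the choice $M=\prod p_i^{m_i-n_i}$ after pushing a point of infinite order into $E_2(\Q_{p_i})$ for every $i$. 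What your route buys is a transparent, checkable proof with explicit control of the small-prime issue at $p=2$; what the paper's route buys is brevity and the automatic handling of an arbitrary rational Weierstrass equation.

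That last point is the one step you skip and should add. The equivalence you open with, namely that ${\rm ord}_p(X(P))=-2m$ and ${\rm ord}_p(Y(P))=-3m$ exactly when $P\in E_m(\Q_p)\setminus E_{m+1}(\Q_p)$, holds for a Weierstrass equation with $p$-integral coefficients; the proposition only assumes $a,b\in\Q$, and if $a$ or $b$ has a $p_i$ in its denominator the dictionary is off by a constant shift. This does not threaten the conclusion, since the statement is insensitive to replacing $N$ by $N$ plus a bounded quantity, but you need one sentence rescaling $(X,Y)\mapsto(u^{-2}X,u^{-3}Y)$ to an integral (or minimal) model and recording the resulting shifts $s_i={\rm ord}_{p_i}(u)$ in the $m_i$ --- which is precisely the bookkeeping the paper's proof consists of. With that sentence added, and perhaps a remark that finiteness of $E(\Q_{p_i})/E_2(\Q_{p_i})$ also uses finiteness of $E(\Q_{p_i})/E_0(\Q_{p_i})$ (Kodaira--N\'eron), your proof is complete.
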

\begin{proof}Let $E'$ with coordinates $(X',Y')$ be the global minimal Weierstrass Equation
of $E$ over $\Q$. It is well-known that such equation exists
(see~\cite{Silverman} and~\cite{Knapp}, for instance). The
coordinates $(X,Y)$ and $(X',Y')$ are related by
$$X=u^2X'+f\ {\rm and}\ Y=u^3Y'+gu^2X'+h,\ u,f,g,h\in\Q,\ u\neq0.
$$
Hence, there exists an integer $M_1$ such that
$$U_{m_1-s_1,m_2-s_2,...,m_t-s_t}(E)=U_{m_1,m_2,...,m_t}(E').
$$for all $m_1,m_2,...,m_t\geq M_1$, where $s_1={\rm ord}_{p_1}(u),...,s_t={\rm ord}_{p_t}(u)$.

By Proposition 3.1 and 3.3 of~\cite{RajanR}, there exists an integer
$M_2$ such that $$U_{m_1,m_2,...,m_t}(E')\neq \varnothing,\ {\rm
for\ all\ } m_1,m_2,...,m_t\geq M_2.
$$
Let $M=\max\{M_1,M_2\}$. Let $N=\max\{M-s_1,M-s_2,...,M-s_t\}$. Then
$$U_{m_1,m_2,...,m_t}(E)\neq \varnothing,\ {\rm
for\ all\ } m_1,m_2,...,m_t\geq N.
$$
\end{proof}
\begin{prop}\label{prop:3}Assume $E(\R)[2]=\Z/2\Z$ and $E(\Q)$ has positive
rank. Let $Q_1,Q_2\in E(\Q)$ such that $Q_1$ has infinite order.
Then for any $ M>0$, there are infinitely many points $P$ belonging
to the set $\{\pm([n]Q_1+Q_2)|n\in\Z\}$ such that $X(P)>M$ and
$y(P)>0$.
\end{prop}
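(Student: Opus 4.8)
The plan is to use the real-analytic topology of $E$. Since $E(\R)[2]=\Z/2\Z$, the cubic $f(X)=X^3+aX+b$ has a single real root $e$; hence its discriminant is negative and $E(\R)$ is connected, so as a compact real Lie group it is isomorphic to the circle group $\R/\Z$. The affine real points satisfy $X\ge e$, and for each $X>e$ there are exactly two of them, namely $(X,\pm\sqrt{f(X)})$; both branches tend to the identity $O$ (the point at infinity) as $X\to+\infty$, since there $Y=\pm\sqrt{f(X)}\to\pm\infty$. Consequently the target region
\[
A^+=\{P\in E(\R):X(P)>M,\ Y(P)>0\}
\]
is exactly the part of the upper branch with $X>M$, which is a nonempty open arc of $E(\R)$ abutting $O$ on one side.

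First I would record that $Q_1$ has infinite order already in $E(\R)$: if $[m]Q_1=O$ in $E(\R)$ for some $m\ge 1$, the same identity holds in $E(\Q)\subseteq E(\R)$, contradicting the hypothesis that $Q_1$ has infinite order. Next I would invoke the classification of closed subgroups of the circle. The closure $\overline{\langle Q_1\rangle}$ in $E(\R)$ is a closed subgroup, and since $Q_1$ is non-torsion this closure is infinite; but the only infinite closed subgroup of $\R/\Z$ is the whole group, so $\langle Q_1\rangle$ is dense in $E(\R)$. Translating by $Q_2$, the coset $\{[n]Q_1+Q_2:n\in\Z\}$ is dense as well.

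Finally, because $A^+$ is a nonempty open subset of $E(\R)$ and $\{[n]Q_1+Q_2:n\in\Z\}$ is dense, infinitely many indices $n$ satisfy $[n]Q_1+Q_2\in A^+$. Each such point lies in the prescribed set $\{\pm([n]Q_1+Q_2)\}$ and satisfies $X(P)>M$ and $Y(P)>0$, which is the assertion. The sign $\pm$ is harmless here: since $X(-P)=X(P)$ and $Y(-P)=-Y(P)$, one could equally well place infinitely many points in the open set $\{X>M\}$ and then adjust the sign to force $Y>0$.

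The genuinely delicate point is the global real picture, not any computation. One must be certain that the hypothesis $E(\R)[2]=\Z/2\Z$ forces $E(\R)$ to be a single circle: if $f$ had three real roots, $E(\R)$ would have two connected components and the density argument would confine the orbit to the identity component only. Thus the reduction of $A^+$ to a nonempty open arc, together with the connectedness of $E(\R)$, is precisely where the hypothesis is used; once these are in hand, the density of an infinite-order orbit on the circle (Kronecker's theorem, equivalently the classification of closed subgroups of $S^1$) completes the proof.
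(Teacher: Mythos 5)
Your proof is correct and follows essentially the same route as the paper: both use that $E(\R)[2]=\Z/2\Z$ forces $E(\R)$ to be connected, hence isomorphic to the circle group, and then the density of the infinite-order coset $\{[n]Q_1+Q_2\}$ to land infinitely many points in the region $X>M$, $Y>0$. Your version is slightly more explicit about why the orbit is dense (classification of closed subgroups of $S^1$) and phrases the conclusion via a nonempty open arc rather than a sequence tending to $O$, but these are presentational differences only.
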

\begin{proof}Since $E(\R)[2]=\Z/2\Z$, $E(\R)$ is a 1-dimensional
commutative connected compact lie group. Hence, $E(\R)$ is
isomorphic to the unit circle group $\{z\in\C|\ |z|=1\}$ as lie
groups (see p.7 of \cite{Poonen} or p.42 of \cite{ST}, for
instance). The image of the set $\{[n]Q_1+Q_2|n\in\Z\}$ are
everywhere dense in the circle group since $Q_1$ has infinite order.
Pick up a sequence $z_n$ from the image set such that $\lim z_n=1$
as $n$ goes to infinity. The corresponding sequence $R_n$ on $E(\R)$
goes to the zero element as $n$ goes to infinity. This is equivalent
to $\lim X(R_n)=+\infty$. Taking negative if necessary, we can
assume $Y(R_n)>0$ for all $n$. This concludes the proof.
\end{proof}
\section{Proof of the main theorem} We now apply Proposition~\ref{prop:2} and~\ref{prop:3} to the curve $E_{\theta}$. From the results in section 2, we know
that $E_{\theta}(\Q)$ has positive rank and
$E_{\theta}(\R)[2]=\Z/2\Z$. Recall that $k\neq l$ are coprime
square-free positive integers. So $E_{\theta}$ satisfies the
conditions of Proposition~\ref{prop:2} and~\ref{prop:3}. Let $S$ be
the set of prime divisors of $k$ and $l$.

In what following, we will find an infinite set $\mathcal{P}$ such
that every point $(X,Y)$ of $\mathcal{P}$ satisfies
equation~\ref{eq:5}.

By Proposition~\ref{prop:2},
\begin{equation}\label{eq:3}U_{m_1,m_2,...,m_t}(E_{\theta})\neq \varnothing \end{equation}
for $m_1,m_2,...,m_t$ being sufficiently large.

Let $P=(X,Y)$ be an arbitrary element of
$U_{m_1,m_2,...,m_t}(E_{\theta})$. Look carefully at the
transformation formula in $(iii)$ of Proposition~\ref{prop:1}. For
$1\leq i\leq t$, we have
\begin{equation}\label{eq:4}\begin{aligned}
&{\rm ord}_{p_i}(x)={\rm ord}_{p_i}(6\beta Y)-{\rm
ord}_{p_i}(18\beta X^2)={\rm ord}_{p_i}(3)-m_i,\\&{\rm
ord}_{p_i}(y)={\rm ord}_{p_i}(6\beta Y)-{\rm ord}_{p_i}(18\beta
X^2)={\rm ord}_{p_i}(3)-m_i,\\&{\rm ord}_{p_i}(A_x)=3{\rm
ord}_{p_i}(x)-{\rm ord}_{p_i}(r)=3{\rm ord}_{p_i}(3)-3m_i-{\rm
ord}_{p_i}(r),\\&{\rm ord}_{p_i}(A_y)=3{\rm ord}_{p_i}(y)-{\rm
ord}_{p_i}(r)=3{\rm ord}_{p_i}(3)-3m_i-{\rm ord}_{p_i}(r),
\end{aligned}
\end{equation}
if $m_1,m_2,...,m_t$ are sufficiently large.

Therefore, we can fix suitable, large $m_1,m_2,...,m_t$ such that
equations~\ref{eq:3} and~\ref{eq:4} hold and ${\rm
ord}_{p_i}(A_x),{\rm ord}_{p_i}(A_y)$ are both even for all points
$P=(X,Y)\in U_{m_1,m_2,...,m_t}(E_{\theta})$, where $1\leq i\leq t$.

From the transformation formula in $(iii)$ of
Proposition~\ref{prop:1} again, we have
$$x\approx-\frac{6\beta Y}{18\beta X^2}\approx-\frac{1}{3 \sqrt{X}}\rightarrow0^-\ {\rm and}\ y\approx-\frac{6\beta Y}{18\beta X^2}\approx-\frac{1}{3 \sqrt{X}}\rightarrow0^-
$$
as $X$ goes to $+\infty$ ($Y>0$). So there exists $M>0$ such that if
$X>M$ and $Y>0$, then $-1-\beta<x,\ y<0$ which implies $A_x,\
A_y>0$.

Fix an element $Q\in U_{m_1,m_2,...,m_t}(E_{\theta})$. Let $h$ be a
positive integer coprime to $p_1p_2...p_t$. Set
$Q_1=[p_1p_2...p_t]Q$ and $Q_2=[h]Q$. Applying
Proposition~\ref{prop:3} to such $Q_1,Q_2,M$, we find an infinite
set $\mathcal{P}$ such that
$\mathcal{P}\subset\{\pm([n]Q_1+Q_2)|n\in\Z\}$ and every element
$(X,Y)$ of $\mathcal{P}$ satisfies $X>M$ and $Y>0$. So every element
$(X,Y)$ of $\mathcal{P}$ satisfies $A_x>0,A_y>0$.

Since $[n]Q_1+Q_2=[np_1p_2...p_t+h]Q$ and $(h,\ p_1p_2...p_t)=1$, by
Proposition 3.1 of~\cite{RajanR}, we have
$\mathcal{P}\subset\{\pm([n]Q_1+Q_2)|n\in\Z\}\subset
U_{m_1,m_2,...,m_t}(E_{\theta})$. So every element $(X,Y)$ of
$\mathcal{P}$ satisfies that ${\rm ord}_{p}(A_x),{\rm ord}_{p}(A_y)$
are both even for all $p\in S$.

Recall that $N_x$ (resp. $N_y$) is the square-free part of $A_x$
(resp. $A_y$). Summing up, we have
$$A_x>0,\ A_y>0,\
(l,N_x)=1,\ (k,N_y)=1$$ for every point $(X,Y)$ of $\mathcal{P}$.
Therefore
\begin{thm}\label{th:1}For each $(X,Y)\in \mathcal{P}$, we have $lN_x=kN_y.
$
\end{thm}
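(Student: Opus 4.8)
The plan is to reduce the asserted identity $lN_x=kN_y$ to the defining equation of $H_\theta$ together with the $p$-adic information already assembled for points of $\mathcal P$. The only geometric input I would use is the curve itself. Writing $(x+\beta)^2-1=(x+\beta-1)(x+\beta+1)$, the equation of $H_\theta$ reads $lx((x+\beta)^2-1)=ky((y+\beta)^2-1)$, and dividing through by $r$ gives the \emph{exact} rational identity $lA_x=kA_y$. So before any square-free considerations there is a clean equality between $lA_x$ and $kA_y$; everything else is arithmetic of square-free parts.

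Since every $(X,Y)\in\mathcal P$ satisfies $A_x>0$ and $A_y>0$, I would next write these positive rationals as $A_x=N_x u^2$ and $A_y=N_y v^2$ with $u,v\in\Q$ nonzero and $N_x,N_y$ their positive square-free parts. Substituting into $lA_x=kA_y$ gives $lN_x u^2=kN_y v^2$, hence $lN_x/(kN_y)=(v/u)^2$ is a square of a rational number. The crucial point is that $lN_x$ and $kN_y$ are themselves square-free: $k$ and $l$ are square-free by hypothesis, and the valuation analysis above forces ${\rm ord}_p(A_x)$ and ${\rm ord}_p(A_y)$ to be even for every prime $p\in S$ (the primes dividing $kl$), so no such $p$ divides $N_x$ or $N_y$. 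In particular $(l,N_x)=1$ and $(k,N_y)=1$, and a coprime product of square-free integers is square-free, so $lN_x$ and $kN_y$ are positive square-free integers whose ratio is a rational square. Comparing $p$-adic valuations prime by prime, ${\rm ord}_p(lN_x)\equiv{\rm ord}_p(kN_y)\pmod 2$ for all $p$; since both sides lie in $\{0,1\}$ they are equal for every $p$, whence $lN_x=kN_y$, as claimed.

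The genuine content lies entirely in the coprimality $(l,N_x)=(k,N_y)=1$, and this is the step I expect to carry all the weight. It is \emph{not} a formal consequence of $lA_x=kA_y$: if some prime dividing $l$ divided $N_x$ to an odd order, passing to square-free parts would fail to respect the relation and the final equality could break. Securing the even-valuation condition at the primes of $S$ is exactly why the earlier construction selected the exponents $m_i$ large and arranged ${\rm ord}_{p_i}(A_x)$, ${\rm ord}_{p_i}(A_y)$ to be even on all of $\mathcal P$. Granting that preparation, the remainder is the short square-free-part manipulation sketched above, and no further computation with the explicit transformation formulas of Proposition~\ref{prop:1} is needed.
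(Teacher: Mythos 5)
Your argument is correct and is essentially the paper's own: the paper likewise deduces $lA_x=kA_y$ from the equation of $H_\theta$ and then passes to square-free parts, with the even-valuation condition at the primes of $S$ (hence $(l,N_x)=(k,N_y)=1$) doing all the work. You have merely spelled out the square-free-part manipulation that the paper compresses into one sentence.
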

Following ~\cite{RajanR}, we get
\begin{thm}\label{th:2} Associated with the infinite set of points $(X,Y)$ in
$\mathcal{P}$, there are infinitely many pairs of square-free
integers $(N_x,N_y)$
\end{thm}
\begin{proof}
Assume that there are only finitely many such pairs. Then there must
exist a pair $(N,M)$ of square-free integers associated with
infinitely many rational points $(X,Y)$ in $\mathcal{P}$. Using
$(x,y)$ instead of $(X,Y)$, we conclude that the curve
$\mathcal{C}:$
\begin{equation*}
 \left\{\begin{aligned}
 &lx(x+\beta-1)(x+\beta+1)=ky(y+\beta-1)(y+\beta+1)\\
&x(x+\beta-1)(x+\beta+1)/r=Nz^2
 \end{aligned}
 \right.
  \end{equation*}
  has infinitely many rational points. Consider the rational map of curves:
  $$\mathcal{C}\rightarrow E_{\beta},\ \ (x,y,z)\mapsto(x,y).
  $$
This map is of degree $2$ and ramified at the point $(x,y)=(0,0)$.
The Riemann-Hurwitz formula implies that the genus of $\mathcal{C}$
is greater than $1$. By Faltings~\cite{Faltings}'s theorem,
$\mathcal{C}$ only has a finite number of rational points. Thus, we
get a contradiction. So there are infinitely many such pairs.
\end{proof}
Putting Theorem~\ref{th:1} and \ref{th:2} together, we get our main
theorem:
\begin{thm}Let $0<\theta<\pi$. If $k$ and $l$ are
positive, square-free coprime integers, then there exist infinitely
many pairs $(M,N)$ of $\theta$-congruent numbers such that $lN=kM$.
\end{thm}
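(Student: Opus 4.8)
The plan is to assemble the main theorem directly from the two results just proved, Theorem~\ref{th:1} and Theorem~\ref{th:2}, so that the remaining work is essentially bookkeeping on top of the machinery of Sections 2--4. Setting $\beta=\cos\theta$, I would first recall that, by the results of Section 2, the Jacobian $E_{\theta}$ has positive rank and satisfies $E_{\theta}(\R)[2]=\Z/2\Z$; hence it meets the hypotheses of Proposition~\ref{prop:2} and Proposition~\ref{prop:3}. These two propositions produce the infinite set $\mathcal{P}\subset E_{\theta}(\Q)$ whose points, transported to $H_{\theta}$ through the rational transformation in part (iii) of Proposition~\ref{prop:1}, yield rational points $(x,y)$ with $A_x>0$, $A_y>0$ and with ${\rm ord}_{p}(A_x)$ and ${\rm ord}_{p}(A_y)$ both even for every $p\in S$.

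Next I would extract the $\theta$-congruent numbers themselves. For each such point the quantities $A_x r\sin\theta$ and $A_y r\sin\theta$ are, by the explicit triangle construction recalled at the end of Section 2, the areas of rational $\theta$-triangles. Writing $A_x=N_x t_x^{2}$ and $A_y=N_y t_y^{2}$ with $N_x,N_y$ square-free and $t_x,t_y\in\Q$, I would scale the two triangles by $t_x^{-1}$ and $t_y^{-1}$ respectively; this preserves the angle $\theta$ and rationality of the sides while dividing the area by the corresponding square, so that $N_x r\sin\theta$ and $N_y r\sin\theta$ are again areas of rational $\theta$-triangles. Thus $N_x$ and $N_y$ are genuine $\theta$-congruent numbers. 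The coprimality conditions $(l,N_x)=1$ and $(k,N_y)=1$, guaranteed by the even-valuation control above, then allow me to take square-free parts of the identity $lA_x=kA_y$ and obtain $lN_x=kN_y$, which is exactly the content of \eqref{eq:5} and of Theorem~\ref{th:1}.

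Finally I would invoke Theorem~\ref{th:2} to conclude that the points of $\mathcal{P}$ give rise to infinitely many \emph{distinct} pairs $(N_x,N_y)$, and relabel $(M,N):=(N_y,N_x)$, so that $lN=lN_x=kN_y=kM$. Combining this infinitude with the ratio relation finishes the argument. The hard part of the whole scheme, already handled in Theorem~\ref{th:2}, is precisely this infinitude of \emph{distinct} pairs: a priori the infinitely many points of $\mathcal{P}$ could collapse onto finitely many square-free pairs, and excluding this requires the fibre analysis showing that the curve $\mathcal{C}$ obtained by fixing a pair $(N,M)$ has genus greater than one (via Riemann--Hurwitz applied to the degree-two covering $\mathcal{C}\to E_{\beta}$), whence Faltings' theorem permits only finitely many rational points on each such fibre. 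Everything else reduces to the valuation bookkeeping of Section 4, which I would treat as routine.
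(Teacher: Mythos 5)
Your proposal is correct and follows essentially the same route as the paper, which likewise obtains the main theorem by combining Theorem~\ref{th:1} (the ratio identity $lN_x=kN_y$ on the infinite set $\mathcal{P}$) with Theorem~\ref{th:2} (the infinitude of distinct pairs $(N_x,N_y)$, via Riemann--Hurwitz and Faltings). Your explicit scaling of the triangles by $t_x^{-1}$, $t_y^{-1}$ to pass from $A_x$, $A_y$ to their square-free parts is a welcome clarification of a step the paper leaves implicit, but it does not change the argument.
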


Letting $l=1$, we get the following corollary.
\begin{cor}Let $0<\theta<\pi$. Given a positive, square-free integer
$k$, there exist infinitely many pairs $(M,N)$ of $\theta$-congruent
numbers such that $N=kM$.
\end{cor}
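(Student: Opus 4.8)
The plan is to read off this corollary as the special case $l=1$ of the main theorem established just above, so that almost no new argument is needed beyond verifying that the specialization is legitimate.

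First I would check that the pair $(k,l)=(k,1)$ satisfies the hypotheses of that theorem for any admissible $k$: the integer $l=1$ is positive and square-free, $k$ is positive and square-free by assumption, and $\gcd(k,1)=1$, so $k$ and $l$ are coprime. Invoking the theorem then yields infinitely many pairs $(M,N)$ of $\theta$-congruent numbers with $lN=kM$. Setting $l=1$ turns this relation into $N=kM$, which is precisely the claim; and the infinitude is preserved, since the pairs arise from the infinite set $\mathcal{P}\subset E_\theta(\Q)$ produced in the proof and remain distinct after the substitution.

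The single point needing care is the degenerate value $k=1$. Here $k=l$, and the construction of Section 2 --- which built the generalized Holm curve $H_\beta$ under the standing assumption $k\neq l$ --- does not apply verbatim. But for $k=1$ the conclusion $N=kM$ reads $N=M$, so the corollary degenerates to the assertion that there exist infinitely many $\theta$-congruent numbers, which is either excluded by the convention $k>1$ or handled separately by exhibiting a single curve $E_{n,\theta}$ of positive rank via Fujiwara's Theorem. Consequently the genuine content of the corollary is the range $k>1$, where it is an immediate consequence of the main theorem. I anticipate no real obstacle: the only thing to get right is the bookkeeping of the coprimality and square-freeness conditions under $l=1$, together with the harmless remark excluding the trivial case $k=1$.
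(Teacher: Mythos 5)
Your proposal matches the paper's own proof exactly: the paper simply says ``Letting $l=1$, we get the following corollary,'' i.e.\ it specializes the main theorem to the pair $(k,1)$, just as you do. Your additional remark about the degenerate case $k=1$ (where $k=l$ and the Holm-curve construction does not literally apply) is a sensible piece of care that the paper glosses over, but it does not change the substance of the argument.
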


\textbf{Acknowledgement:} The authors would like to thank Professor Sunghan Bae
and Professor Linsheng Yin for their encouragements. This work was partially supported by Postdoctoral Science Foundation of China.

\textbf{Supplementary Material:} The files``Eq.nb",
``invariants.nb", ``trans.nb", ``coordinates.nb" will appear on
line.

\end{document}